\date{\empty}
\numberwithin{equation}{section} \theoremstyle{plain}
\newtheorem*{thm*}{Main Theorem}
\newtheorem{theorem}{Theorem}
\newtheorem{corollary}[theorem]{Corollary}
\newtheorem*{corollary*}{Corollary}
\newtheorem*{claim*}{Claim}
\newtheorem{lemma}[theorem]{Lemma}
\newtheorem*{lemma*}{Lemma}
\newtheorem*{proposition*}{Proposition}
\newtheorem*{remark*}{Remark}
\newtheorem{example}[theorem]{Example}
\newtheorem*{example*}{Example}
\newtheorem*{question*}{Question}
\newtheorem*{definition*}{Definition}
\begin{document}
\begin{center}
{\large \bf Moore-Penrose Invertibility of Differences and Products of Projections in Rings with Involution}\\

\vspace{0.8cm} {\small \bf Xiaoxiang Zhang, \ \ Shuangshuang Zhang, \ \ Jianlong Chen\footnote{Corresponding author.\\
E-mail: z990303@seu.edu.cn (X. Zhang), jlchen@seu.edu.cn (J. Chen).},\ \ Long Wang}\\
\vspace{0.6cm} {\rm Department of Mathematics, Southeast University, Nanjing, 210096, China}
\end{center}

\bigskip

{ \bf  Abstract:}
\leftskip0truemm\rightskip0truemm
This article concerns the MP inverse of the differences and the products of projections in a ring $R$ with involution.
Some equivalent conditions are obtained.
As applications, the MP invertibility of the commutator $pq-qp$ and the anti-commutator $pq+qp$ are characterized,
where $p$ and $q$ are projections in $R$.
Some related known results in $C^*$-algebra are generalized. \\
{  \textbf{Keywords:}} Moore-Penrose inverse; projection; ring with involution; $*$-reducing ring. \\
\noindent { \textbf{2010 Mathematics Subject Classification:}} 15A09; 16U99; 16W10.
 \bigskip

%%%%%%%%%%%%%%%%%%%%%%%%%%%%%%%%%%%%%%%%%%%%%%%%%%%%%%%%%%%%%%%%%%%%%
%%%%%%%%%%%%%%%%%%%%%    Section 1   %%%%%%%%%%%%%%%%%%%%%%%%%%%%
%%%%%%%%%%%%%%%%%%%%%%%%%%%%%%%%%%%%%%%%%%%%%%%%%%%%%%%%%%%%%%%%%%%%%

\section { \bf Introduction}
Moore-Penrose inverse (abbr. MP inverse) of the sums, differences and the products of projections in various settings attracts wide interest from many authors.
For instance, Cheng and Tian \cite{Cheng Tian} presented expressions for the MP inverse of such matrices as $P_AP_B$, $P_A - P_B$, and $P_AP_B - P_BP_A$,
where $A$ and $B$ are complex matrices, $P_A = AA^{\dag}$ and $P_B = BB^{\dag}$.
Du and Deng \cite{Du Deng} studied the MP inverses of $PQ$ and $P-Q$,
where $P$ and $Q$ are projections in the ring $B(H)$ of all linear bounded operators on a Hilbert space $H$.
Li \cite{Li 2008, Li 2009} investigated MP inverses of $pq$, $p - q$, $pq - qp$ and $pq + qp$
for two given projections $p$ and $q$ in a $C^*$-algebra.
Recently, Deng and  Wei \cite{Deng Wei} established some formulae for the MP inverse of the sums, differences and the products of projections in a Hilbert space
so that some known results in the literature were extended.

This article is mainly motivated by \cite{Deng Wei, Li 2008, Li 2009}.
We investigate the MP inverse of the differences and the products of projections in a ring $R$ with involution.
Some equivalent conditions are obtained.
As applications, the MP invertibility of the commutator $pq-qp$ and the anti-commutator $pq+qp$ are characterized,
where $p$ and $q$ are projections in $R$.
Some results in \cite{Deng Wei, Li 2008, Li 2009} are generalized.
Note that some methods based on decomposition of matrix, orthogonal decomposition of Hilbert spaces and spectral technique
are not available in an arbitrary ring with involution.
The results in this paper are proved by a purely ring theoretical method.

Throughout this paper, $R$ is an associative ring with unity and an involution $a \mapsto a^*$ satisfying
$(a^*)^* = a$, $(a+b)^* = a^* + b^*$, $(ab)^* = b^*a^*$.
An element $a\in R$ has MP inverse,
if there exists $b$ such that the following equations hold \cite{Penrose}:
$$(1)\ \ aba = a, \quad\quad  (2)\ \ bab = b, \quad\quad  (3)\ \ (ab)^* = ab, \quad\quad (4)\ \ (ba)^{\ast} = ba.$$
In this case, $b$ is unique and denoted by $a^{\dag}$.
Moreover, we have $(a^{\dag})^{\dag} = a$.
It is easy to see that $a$ has MP inverse if and only if $a^{*}$ has MP inverse, and in this case $(a^{*})^{\dag} = (a^{\dag})^{*}$.

An element $a\in R$ has Drazin inverse,
if there exists $b$ such that the following equations hold \cite{Drazin}:
$$(1)\ \ ab = ba, \quad\quad  (2)\ \ bab = b, \quad\quad  (3)\ \ a^{k+1}b = a^{k} .$$
for some nonnegative integer $k$.
In this case, $b$ is unique and denoted by $a^{\mathrm{d}}$.
The smallest integer $k$ for which the above equations hold is called the Drazin index
of $a$, denoted by ind$(a)$. If $k = 1$, then $b$ is called group inverse of $a$ and denoted by $a^{\sharp}$.

We write $R^{\dag}$, $R^{\mathrm{d}}$ and $R^{\sharp}$ as the set of all MP invertible elements,
all Drazin invertible elements and group invertible elements in $R$, respectively.

If $a^{*} = a\in R^{\dag}$, then $aa^{\dag} = a^{\dag}a$, i.e. $a^{\dag} = a^{\sharp}$.
In this case, $a$ is called an EP element.
An idempotent $p \in R$ is called a projection if it is self-adjoint, i.e., $p^{*} = p$.

Recall from \cite{Koliha Patricio} that a ring $R$ is said to be $*$-reducing if, for any element $a\in R$,
$a^*a = 0$ implies $a = 0$.
Note that $R$ is $*$-reducing if and only if the following implications hold for any $a\in R$:
$$a^{*}ax = a^{*}ay \Rightarrow ax = ay \quad\quad \mbox{and} \quad\quad xaa^{*} = yaa^{*} \Rightarrow xa = ya.$$
It is well-known that any $C^{*}$-algebra is a $*$-reducing ring.

\section{ \bf  Main results}

Let us remind the reader that, in what follows, $R$ is always a ring with involution $*$.
By $p$ and $q$ we mean two projections in $R$.
We also fix the notations $a = pqp$, $b = pq(1-p)$, $d = (1-p)q(1-p)$, $\overline{p} = 1-p$ and $\overline{q} = 1-q$.

The following lemmas will be used in the sequel.

\begin{lemma}\label{Lemma 2.1}\emph{(\cite[Lemma 2.1]{Benitez CvetkovicIlic})}
\emph{(1)} If $r\in R^{\dag}$, then $r^{*}r$, $rr^{*}\in R^{\dag}$ and the following equalities hold:
           $(r^{*}r)^{\dag} = r^{\dag}(r^{*})^{\dag}$,
           $(rr^{*})^{\dag} = (r^{*})^{\dag}r^{\dag}$,
           $r^{\dag} = (r^{*}r)^{\dag}r^{*} = r^{*}(rr^{*})^{\dag}.$

\emph{(2)} Suppose that $R$ is $*$-reducing.
           If $r^{*}r\in R^{\dag}$ or $rr^{*}\in R^{\dag}$, then $r\in R^{\dag}$.
\end{lemma}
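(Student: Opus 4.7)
The plan is to verify the claimed formulae directly by checking the four Penrose equations, and then to use the $*$-reducing hypothesis to handle the converse direction in part (2).

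For part (1), I would set $s = r^{\dag}$ and first record the two immediate consequences of the Penrose equations: $rs$ and $sr$ are self-adjoint idempotents, equivalently $s^*r^* = rs$ and $r^*s^* = sr$. The candidate MP inverse of $r^*r$ is $ss^* = r^{\dag}(r^{\dag})^* = r^{\dag}(r^*)^{\dag}$. I would check the four axioms by repeatedly collapsing $rs$ and $sr$: for example, $(r^*r)(ss^*)(r^*r) = r^*(rs)(rs)^* r = r^*(rs)(rs)r = r^*(rs)r = r^*r$, and similarly for $(ss^*)(r^*r)(ss^*) = ss^*$. For the self-adjointness axioms, a short calculation gives $(r^*r)(ss^*) = sr$ and $(ss^*)(r^*r) = sr$, both of which are self-adjoint. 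The symmetric formula $(rr^*)^{\dag} = (r^*)^{\dag}r^{\dag}$ follows either by an identical argument or by applying the first formula to $r^*$ in place of $r$ and using $(a^*)^{\dag} = (a^{\dag})^*$. The auxiliary identities $r^{\dag} = (r^*r)^{\dag}r^* = r^*(rr^*)^{\dag}$ are then one-line checks: $(r^*r)^{\dag}r^* = ss^*r^* = s(rs) = srs = s$.

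For part (2), assume without loss of generality that $r^*r \in R^{\dag}$, and write $t = (r^*r)^{\dag}$. Since $r^*r$ is self-adjoint, so is $t$. I would propose $u := tr^*$ as the MP inverse of $r$ and verify as many Penrose equations as I can without any cancellation hypothesis. Two of them follow immediately: $ur = tr^*r = (r^*r)^{\dag}(r^*r)$ is self-adjoint by definition of $t$, and $ru = rtr^*$ is self-adjoint because $t^* = t$. The equation $uru = u$ is also immediate, since $uru = t(r^*r)tr^* = tr^* = u$.

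The main obstacle is the remaining equation $rur = r$, because multiplying $(r^*r)t(r^*r)= r^*r$ on the left by an MP inverse of $r^*$ is not available. This is exactly where the $*$-reducing hypothesis enters: from $(r^*r)t(r^*r) = r^*r$ I get $r^*(rur - r) = 0$, i.e.\ $r^*r(ur - 1) = r^*r\cdot 0$, and the cancellation law $a^*ax = a^*ay \Rightarrow ax = ay$ (with $a = r$) yields $r(ur-1) = 0$, that is, $rur = r$. The case $rr^* \in R^{\dag}$ is handled by applying the just-proved statement to $r^*$ and invoking the fact that $r^* \in R^{\dag}$ iff $r \in R^{\dag}$.
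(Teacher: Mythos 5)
Your proof is correct and complete: part (1) is the standard direct verification of the four Penrose equations for the candidate $r^{\dag}(r^{*})^{\dag}$, and in part (2) you use the $*$-reducing cancellation $r^{*}rx = r^{*}ry \Rightarrow rx = ry$ exactly where it is needed, to obtain $rur = r$ for $u = (r^{*}r)^{\dag}r^{*}$. The paper itself offers no proof here (it only cites Ben\'{i}tez and Cvetkovi\'{c}-Ili\'{c}), so your argument simply supplies the routine verification that the citation stands in for, and it does so without any gaps.
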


\begin{lemma}\label{Lemma 2.2}
\emph{(1)} $bb^{*} = (p-a)-(p-a)^{2}$.

\emph{(2)}  $b^{*}b =d-d^{2}$.

\emph{(3)}  $db^{*} = b^{*}(p-a)$.
\end{lemma}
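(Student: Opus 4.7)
\textbf{Proof plan for Lemma \ref{Lemma 2.2}.}
All three identities are direct computations, so the plan is essentially bookkeeping with the relations $p^{2}=p$, $q^{2}=q$, $p^{*}=p$, $q^{*}=q$. The first step is to observe that
\[
b^{*} = \bigl(pq(1-p)\bigr)^{*} = (1-p)^{*}q^{*}p^{*} = (1-p)qp,
\]
and to record the identities $pa = ap = a$ (since $a=pqp$ and $p^{2}=p$) and $(1-p)^{2}=1-p$. These will collapse most of the cross-terms below.

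For part (1), I would expand $bb^{*} = pq(1-p)(1-p)qp = pq(1-p)qp = pqpqp? $ no: I would expand by writing $1-p = 1-p$ and distributing, obtaining $bb^{*} = pqp - (pqp)^{2} = a - a^{2}$. Then I would independently expand $(p-a)-(p-a)^{2}$ using $pa=ap=a$ to get $(p-a)-(p - 2a + a^{2}) = a-a^{2}$, and match the two.

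For part (2), I would compute $b^{*}b = (1-p)qp \cdot pq(1-p) = (1-p)qpq(1-p)$ and then substitute $p = 1-(1-p)$ in the middle, giving $(1-p)q[1-(1-p)]q(1-p) = (1-p)q(1-p) - (1-p)q(1-p)q(1-p) = d - d^{2}$, using $q^{2}=q$ and $(1-p)^{2}=1-p$. For part (3) the same kind of calculation applies: starting from $db^{*} = (1-p)q(1-p)qp$, replace $1-p = 1 - p$ in the middle to obtain $(1-p)qp - (1-p)qpqp$, and expand $b^{*}(p-a) = (1-p)qp(p-pqp) = (1-p)qp - (1-p)qpqp$ using $p^{2}=p$; the two sides agree.

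There is no real obstacle here; the only thing to watch is consistency in reducing the idempotent powers ($p^{2}=p$, $(1-p)^{2}=1-p$, $q^{2}=q$) and in applying $pa = ap = a$ whenever a $p$ meets an $a$. The lemma is intended as a technical tool for later manipulations, and its proof is purely a couple of lines of algebra once $b^{*}=(1-p)qp$ is written down.
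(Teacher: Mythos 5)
Your proposal is correct and matches the paper, whose proof is just ``by a direct verification''; your explicit computations (using $b^{*}=(1-p)qp$, $pa=ap=a$, and the idempotency of $p$, $q$, $1-p$) are exactly the intended verification. The only blemish is the stray false start ``$=pqpqp?$ no:'' in part (1), which you correctly abandon before giving the right expansion.
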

\begin{proof}
By a direct verification.
\end{proof}

The next lemma is essentially due to Benitez and Cvetkovic-Ilic \cite{Benitez CvetkovicIlic}
although we do not assume that $R$ is $*$-reducing.

\begin{lemma}\label{Lemma 2.3}
\emph{(1)} If $p\overline{q}\in R^{\dag}$, then $(p-a)(p-a)^{\dag}b = b$;

\emph{(2)} If $\overline{p}q\in R^{\dag}$, then $bdd^{\dag} = b$;

\emph{(3)} If $p\overline{q}, \overline{p}q\in R^{\dag}$, then $bd^{\dag} = (p-a)^{\dag}b$ and $d^{\dag}b^{*} = b^{*}(p-a)^{\dag}$.

\emph{(4)} If $p\overline{q}, \overline{p}q\in R^{\dag}$, then $p-q \in R^{\dag}$ and $(p-q)^{\dag} = \overline{q}(p\overline{q}p)^{\dag}-q(\overline{p}q\overline{p})^{\dag}$.
\end{lemma}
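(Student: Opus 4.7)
The plan is to handle (1) and (2) by factoring $p-a$ and $d$ through $r = p\overline{q}$ and $s = \overline{p}q$ and applying the formula $r^{\dag} = r^{*}(rr^{*})^{\dag}$ of Lemma~\ref{Lemma 2.1}(1); then (3) will follow by a short algebraic bootstrap from (1), (2) and Lemma~\ref{Lemma 2.2}(3); finally (4) will be verified by checking the four Penrose equations for $x = \overline{q}(p-a)^{\dag} - qd^{\dag}$.

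For (1), set $r = p\overline{q}$, so $rr^{*} = p\overline{q}p = p-a$, and observe that the decomposition $p = pq + p\overline{q} = (a+b) + r$ yields $b = (p-a) - r$. Since $r^{\dag} = r^{*}(rr^{*})^{\dag}$ gives $rr^{\dag} = (p-a)(p-a)^{\dag}$, one has $(p-a)(p-a)^{\dag}r = rr^{\dag}r = r$, hence $(p-a)(p-a)^{\dag}b = (p-a) - r = b$. For (2), set $s = \overline{p}q$; then $ss^{*} = d$ and $b = ps^{*}$. The same argument yields $dd^{\dag} = ss^{\dag}$, and $s^{*}ss^{\dag} = (ss^{\dag}s)^{*} = s^{*}$ by self-adjointness of $ss^{\dag}$, so $bdd^{\dag} = ps^{*}ss^{\dag} = ps^{*} = b$.

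For (3), I would start from $db^{*} = b^{*}(p-a)$ (Lemma~\ref{Lemma 2.2}(3)). Left-multiplying by $d^{\dag}$, and using EP-ness of $d$ together with the involution of (2), gives $b^{*} = d^{\dag}b^{*}(p-a)$; right-multiplying by $(p-a)^{\dag}$ and invoking the involution of (1) then yields $b^{*}(p-a)^{\dag} = d^{\dag}b^{*}$. The companion identity $bd^{\dag} = (p-a)^{\dag}b$ is obtained by applying $*$ to this, since $d^{\dag}$ and $(p-a)^{\dag}$ are self-adjoint.

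For (4), the key preliminary is the positional relation $p(p-a)^{\dag} = (p-a)^{\dag}p = (p-a)^{\dag}$ and $\overline{p}d^{\dag} = d^{\dag}\overline{p} = d^{\dag}$, which follows from a short general observation: if $pr = rp = r$ for a projection $p$ and $r \in R^{\dag}$, then $pr^{\dag} = r^{\dag}p = r^{\dag}$, obtained from the self-adjointness of $r^{\dag}r$ and $rr^{\dag}$. Combined with $q\overline{q} = 0$, expanding $(p-q)x$ collapses cleanly to $(p-a)(p-a)^{\dag} + dd^{\dag}$, which is manifestly self-adjoint; decomposing $p-q = (p-a) - b - b^{*} - d$ and applying (1), its involution, (2), and its involution term by term then gives $(p-q)x(p-q) = p-q$. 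The main obstacle will be showing that $x(p-q)$ equals the same idempotent $(p-a)(p-a)^{\dag} + dd^{\dag}$: expanding $x(p-q)$ produces cross-terms such as $qbd^{\dag}$ and $qb^{*}(p-a)^{\dag}$ whose simplification requires (3), both parts of Lemma~\ref{Lemma 2.2}, and an auxiliary identity $bd = (p-a)b$ that falls out on the way. Once $x(p-q) = (p-q)x$ is in hand, the last equation $x(p-q)x = x$ reduces to checking $x\cdot[(p-a)(p-a)^{\dag} + dd^{\dag}] = x$, which is immediate from the MP-axioms for $(p-a)^{\dag}$ and $d^{\dag}$ together with the positional relations.
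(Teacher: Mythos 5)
Your proposal is correct, and it takes a genuinely different, more self-contained route than the paper's own treatment. The paper proves (1)--(3) purely by citation to Ben\'{i}tez and Cvetkovi\'{c}-Ili\'{c}, and for (4) it imports from the proof of their Theorem 4.1(iii) both the MP-invertibility of $p-q$ and the expression $(p-q)^{\dag}=(p-a)(p-a)^{\dag}-bd^{\dag}-d^{\dag}b^{*}-dd^{\dag}$, which it then merely rewrites into $\overline{q}(p\overline{q}p)^{\dag}-q(\overline{p}q\overline{p})^{\dag}$ using part (3) and the regroupings $(p-a)-b^{*}=\overline{q}p$ and $b+d=q\overline{p}$. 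You instead prove everything from scratch: (1)--(2) via the factorizations $p-a=(p\overline{q})(p\overline{q})^{*}$ and $d=(\overline{p}q)(\overline{p}q)^{*}$ together with Lemma \ref{Lemma 2.1}(1), (3) by a bootstrap from Lemma \ref{Lemma 2.2}(3) using the EP property of $d$ and the involutions of (1) and (2), and (4) by verifying the Penrose equations for $x=\overline{q}(p-a)^{\dag}-qd^{\dag}$ directly; your general positional observation ($pr=rp=r$ implies $pr^{\dag}=r^{\dag}p=r^{\dag}$) is valid and does the work you assign to it. The one step you flag as the main obstacle --- that $x(p-q)$ also equals $(p-a)(p-a)^{\dag}+dd^{\dag}$ --- does go through, in fact more simply than you predict: after the positional relations annihilate $(p-a)^{\dag}b^{*}$, $(p-a)^{\dag}d$, $d^{\dag}(p-a)$, $d^{\dag}b$ and you apply (3), the difference between $x(p-q)$ and $(p-a)(p-a)^{\dag}+dd^{\dag}$ reduces to the two terms $q[b^{*}-(p-a)](p-a)^{\dag}$ and $\overline{q}(b+d)d^{\dag}$, both zero since $b^{*}-(p-a)=-\overline{q}p$ and $b+d=q\overline{p}$; neither Lemma \ref{Lemma 2.2} nor the auxiliary identity $bd=(p-a)b$ is actually needed at that point. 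What your route buys is independence from the cited reference, including an honest proof of the existence statement $p-q\in R^{\dag}$ rather than an appeal to it; what the paper's route buys is brevity and direct access to the intermediate expression $(p-a)(p-a)^{\dag}-bd^{\dag}-d^{\dag}b^{*}-dd^{\dag}$, which is the form it actually manipulates.
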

\begin{proof}
(1)-(3) See \cite[Lemma 2.3]{Benitez CvetkovicIlic}.

(4) If $p\overline{q}, \overline{p}q\in R^{\dag}$, then we have
    \begin{eqnarray}
           (p-q)^{\dag} & = & (p-a)(p-a)^{\dag}-bd^{\dag}-d^{\dag}b^{*}-dd^{\dag}                                   \nonumber \\
                        & \overset{(3)}{=\!\!\!=\!\!\!=} & (p-a)(p-a)^{\dag}-bd^{\dag}-b^*(p-a)^{\dag} -dd^{\dag}   \nonumber \\
                        & = & [(p-a)- b^{*}](p-a)^{\dag}-(b+d)d^{\dag}                                              \nonumber \\
                        & = & [p(1-q)p+(1-p)(1-q)p](p\overline{q}p)^{\dag}-q(1-p)(\overline{p}q\overline{p})^{\dag} \nonumber \\
                        & = & \overline{q}(p\overline{q}p)^{\dag}-q(\overline{p}q\overline{p})^{\dag},
    \end{eqnarray}
    where the first ``='' follows by the proof of \cite[Theorem 4.1(iii)]{Benitez CvetkovicIlic}.
\end{proof}

The following theorem and its corollaries parallel to \cite[Theorem 2.3]{Deng Wei}.

\begin{theorem}\label{Theorem 2.4}
The following statements are equivalent for any two projections $p$ and $q$ in a ring $R$ with involution:

\emph{(1)} $1-pq\in R^{\dag}$,  \quad \emph{(2)} $1-pqp\in R^{\dag}$,  \quad \emph{(3)} $p-pqp\in R^{\dag}$;

\emph{(4)} $1-qp\in R^{\dag}$,  \quad \emph{(5)} $1-qpq\in R^{\dag}$,  \quad \emph{(6)} $q-qpq\in R^{\dag}$.
\end{theorem}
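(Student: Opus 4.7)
I would organize the proof around the symmetries present in the statement. First, $(1-pq)^* = 1-qp$, and MP invertibility is preserved under the involution, so $(1)\Leftrightarrow(4)$ is immediate. By the $p\leftrightarrow q$ interchange, the triple $(4)$, $(5)$, $(6)$ is formally the same as the triple $(1)$, $(2)$, $(3)$ with $p$ and $q$ swapped. Thus it suffices to establish the chain $(1)\Leftrightarrow(2)\Leftrightarrow(3)$; the remaining implications follow by symmetry and by the involution.

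The equivalence $(2)\Leftrightarrow(3)$ rests on the orthogonal decomposition $1-pqp = (p-pqp) + \overline{p}$: both summands are self-adjoint, and $p\overline{p}=0$ gives $(p-pqp)\overline{p} = \overline{p}(p-pqp) = 0$. I would verify the sublemma that, for self-adjoint $x_1, x_2\in R$ with $x_1 x_2 = 0$ (hence also $x_2 x_1 = 0$ by taking $*$), one has $x_1+x_2 \in R^{\dag}$ if and only if both $x_1, x_2 \in R^{\dag}$, with $(x_1+x_2)^{\dag} = x_1^{\dag} + x_2^{\dag}$. The key point is that $x_1 x_2 = 0$, together with self-adjointness of each $x_i^{\dag}$ (which holds because $x_i$ is self-adjoint and MP-invertible, hence EP), forces $x_1^{\dag} x_2 = x_2 x_1^{\dag} = 0$, so the four Penrose axioms decouple summand by summand. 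Since $\overline{p}^{\dag} = \overline{p}$, this yields $(1-pqp)^{\dag} = (p-pqp)^{\dag} + \overline{p}$ whenever either side exists.

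The equivalence $(1)\Leftrightarrow(2)$ is the central step. The main identity is $(1+b)(1-pq) = 1-pqp$, equivalently $1-pq = (1-b)(1-pqp)$, where $b = pq\overline{p}$. Since $\overline{p}p=0$, one computes $b^2 = pq(\overline{p}p)q\overline{p} = 0$, so $1\pm b$ are mutually inverse in $R$. Because multiplication by an invertible element need not preserve MP invertibility in a general ring, I would handle the two directions by explicit construction in the Peirce decomposition $R = pRp \oplus pR\overline{p} \oplus \overline{p}Rp \oplus \overline{p}R\overline{p}$, in which $1-pq$ is upper triangular with diagonal entries $p-pqp$ and $\overline{p}$, while $1-pqp$ is block-diagonal. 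Given $(1-pqp)^{\dag} = (p-pqp)^{\dag} + \overline{p}$, I would propose the candidate
\[
(1-pq)^{\dag} \;=\; (p-pqp)^{\dag} + (p-pqp)^{\dag}\, b + \overline{p}
\]
and verify the four Penrose equations using the structural identities $pb = b = b\overline{p}$, $bp = \overline{p}b = 0$, $b\cdot pqp = 0$, $(p-pqp)(p-pqp)^{\dag} \cdot (p-pqp) = p-pqp$, and Lemma~2.2(1). The direction $(1)\Rightarrow(2)$ is handled analogously via $(1-pqp) = (1+b)(1-pq)$: one constructs $(1-pqp)^{\dag}$ from $(1-pq)^{\dag}$ and verifies the Penrose axioms, or extracts $(p-pqp)^{\dag} = p(1-pqp)^{\dag}p$ after establishing $p \cdot 1-pqp \cdot$--commutativity.

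The hardest step will be verifying Penrose axiom (3), namely $((1-pq)(1-pq)^{\dag})^* = (1-pq)(1-pq)^{\dag}$, for the proposed candidate. A direct computation shows that this reduces to the identity $(p-pqp)(p-pqp)^{\dag}\cdot b = b$, which is precisely the content of Lemma~2.3(1) but without the hypothesis $p\overline{q}\in R^{\dag}$ there. The natural route is to use Lemma~2.2(1) to write $bb^* = (p-pqp) - (p-pqp)^2 \in (p-pqp)R$, from which $(p-pqp)(p-pqp)^{\dag}\cdot bb^* = bb^*$ is immediate, and then to upgrade this from $bb^*$ to $b$ by exploiting the vanishing relations $b^2 = 0$, $b\cdot pqp = 0$, $\overline{p}b = 0$ specific to $b = pq\overline{p}$ rather than invoking $*$-reducing. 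This obstacle, and its analog in the reverse direction, is the only part of the argument where something beyond direct symbolic manipulation is needed.
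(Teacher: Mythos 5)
Your reductions by symmetry (involution for $(1)\Leftrightarrow(4)$, the $p\leftrightarrow q$ swap for $(4)$--$(6)$) and the factorization $1-pqp=(1+b)(1-pq)$ with $b^{2}=0$ are correct, but the central step $(3)\Rightarrow(1)$ fails as planned. Your candidate $(p-pqp)^{\dag}+(p-pqp)^{\dag}b+\overline{p}$ satisfies the third Penrose equation only if $(p-pqp)(p-pqp)^{\dag}b=b$, and you propose to deduce this from $p-pqp\in R^{\dag}$ alone by manipulating $b^{2}=0$, $b\,pqp=0$, $\overline{p}b=0$. That identity is simply false in a general ring with involution: in the paper's own Example 2.6 (with $p=X$, $q=1-Y$) one has $p-pqp=XYX=0\in R^{\dag}$ while $b=pq\overline{p}=XY\neq 0$, so $(p-pqp)(p-pqp)^{\dag}b=0\neq b$; there your candidate collapses to $\overline{p}$ and $(1-pq)\overline{p}=1+X+XY$ is not self-adjoint (its adjoint is $1+X+YX$), even though $1-pq$ \emph{is} MP invertible. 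This is exactly why Lemma 2.3(1) carries the stronger hypothesis $p\overline{q}\in R^{\dag}$, and no elementary "upgrade from $bb^{*}$ to $b$" can replace it without $*$-reducing. The paper avoids the identity altogether: its candidate $x=[1+b^{*}(p-a)](p-a)^{\dag}(1+b)-b^{*}-b^{*}b+1-p$ is built so that $(1-pq)x$ and $x(1-pq)$ are self-adjoint term by term (adjoint pairs occur together), and the remaining Penrose equations are then checked directly; similarly $(1)\Rightarrow(3)$, which you leave as "analogous", is proved there by a nontrivial four-step verification that $(p-pqp)^{\dag}=p(1-pq)^{\dag}p$.

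The sublemma underlying your $(2)\Leftrightarrow(3)$ is also only half proved, and the half you prove is not the half you need. The decoupling argument (cross terms vanish because each $x_{i}^{\dag}$ exists and $x_{i}$ is EP) gives "both summands in $R^{\dag}$ $\Rightarrow$ sum in $R^{\dag}$", i.e.\ $(3)\Rightarrow(2)$, matching the paper's one-line formula $(1-pqp)^{\dag}=(p-pqp)^{\dag}+1-p$. The converse, needed for $(2)\Rightarrow(3)$, presupposes $x_{1}^{\dag}$ in your argument and is in fact false for general orthogonal self-adjoint summands in a non-$*$-reducing ring: in $k[t]/(t^{2})$ with the identity involution, $x_{1}=t$ and $x_{2}=-t$ are self-adjoint, $x_{1}x_{2}=0$, $x_{1}+x_{2}=0\in R^{\dag}$, yet $t\notin R^{\dag}$. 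The implication does hold in your situation because $x_{2}=\overline{p}$ is a projection commuting with $1-pqp$, but the proof then needs a commutation input, not decoupling: since $1-pqp$ is self-adjoint and MP invertible it is EP, and $p$ commutes with $(1-pqp)^{\dag}$ by the double-commutant property of the group inverse (the paper cites Mary, Corollary 12), after which $(p-pqp)^{\dag}=p(1-pqp)^{\dag}$ is a routine check. So both nontrivial implications in your outline need to be replaced by arguments of the paper's type.
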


\begin{proof}
(1)$\Leftrightarrow$(4) is clear by $1-qp = (1-pq)^{*}$.
                        We need only prove that (1)-(3) are equivalent.

(1)$\Rightarrow$(3) Let $x = (1-pq)^{\dag}$.
                    Then we have
                    \begin{eqnarray}
                          (1-p)x(1-pq)p = (1-p)(1-pq)x(1-pq)p = (1-p)(1-pq)p = 0.
                    \end{eqnarray}
                    Consequently,
                    \begin{eqnarray}
                          px(1-pq)(1-p) = [(1-p)x(1-pq)p]^* = 0.
                    \end{eqnarray}
                    On the other hand, by a direct verification, we have
                    \begin{eqnarray}
                          p(1-pq)xp & = & p(1-pq)[p+(1-p)]xp         \nonumber\\
                                    & = & p(1-pq)pxp+p(1-pq)(1-p)xp  \nonumber\\
                                    & = & (p - pqp)pxp - pq(1-p)xp.
                    \end{eqnarray}
                    Now, we verify $p - pqp \in R^{\dag}$ and $(p - pqp)^{\dag} = pxp$.

                    \underline{\emph{Step 1}}. $(p-pqp)pxp(p-pqp) = p-pqp$.
                    Indeed,
                    \begin{eqnarray}
                           (p-pqp)pxp(p-pqp) & \overset{(2.2)}{=\!\!\!=\!\!\!=\!\!\!=} & (p-pqp)pxp(1-pq)p-pq[(1-p)x(1-pq)p] \nonumber \\
                                             & =                                       & [(p-pqp)pxp - pq(1-p)xp](1-pq)p      \nonumber \\
                                             & \overset{(2.4)}{=\!\!\!=\!\!\!=\!\!\!=} & p(1-pq)xp(1-pq)p                  \nonumber \\
                                             & =                                       & p(1-pq)x(1-pq)p                   \nonumber \\
                                             & =                                       & p(1-pq)p
                                               =                                         p-pqp.
                    \end{eqnarray}

                    \underline{\emph{Step 2}}. $[(p-pqp)pxp]^{*} = (p-pqp)pxp$.

                    Actually, (2.2) implies $(1-p)x^{*}p - (1-p)qpx^{*}p$ = $(1-p)(1-qp)x^{*}p$ = $(1-p)[x(1-pq)]^{*}p$ = $(1-p)x(1-pq)p$ = 0.
                    Hence
                    \begin{eqnarray}
                          (1-p)x^{*}p = (1-p)qpx^{*}p.
                    \end{eqnarray}
                    Meanwhile, $(p-pqp)px^*p(p-pqp)$ = $[(p-pqp)pxp(p-pqp)]^* \overset{(2.5)}{=\!\!\!=\!\!\!=\!\!\!=}$ $[p-pqp]^*$ = $p-pqp$, i.e.,
                    \begin{eqnarray}
                          (p-a)px^*p(p-a) = p-a.
                    \end{eqnarray}

                    In view of (2.6), (2.7) and Lemma \ref{Lemma 2.2}(1), one can get
                    \begin{eqnarray*}
                           pq(1-p)xp & = & pq(1-p)(1-pq)xp = pq(1-p)x^{*}(1-qp)p \\
                                     & = & pq(1-p)x^{*}p(1-qp)p+pq(1-p)x^{*}(1-p)(1-qp)p \\
                                     & \overset{(2.6)}{=\!\!\!=\!\!\!=\!\!\!=} & pq(1-p)qpx^{*}p(1-q)p-pq(1-p)x^{*}(1-p)qp \\
                                     & = & bb^{*}x^{*}(p-a)-pq(1-p)x^{*}(1-qp)(1-p)qp \\
                                     & = & [(p-a)-(p-a)^{2}]x^{*}(p-a) - b(1-pq)xb^* \quad\quad (\mbox{see Lemma \ref{Lemma 2.2}}(1))\\
                                     & \overset{(2.7)}{=\!\!\!=\!\!\!=\!\!\!=} & (p-a)-(p-a)^{2}- b(1-pq)xb^*,
                    \end{eqnarray*}
                    where $(p-a)^* = p-a$ and $(b(1-pq)xb^*)^* = b(1-pq)xb^*$.
                    This guarantees
                    \begin{eqnarray}
                           [pq(1-p)xp]^{*} = pq(1-p)xp.
                    \end{eqnarray}
                    So we have
                    \begin{eqnarray*}
                           [(p-pqp)pxp]^{*} & \overset{(2.4)}{=\!\!\!=\!\!\!=\!\!\!=} & [p(1-pq)xp + pq(1-p)xp]^{*}   \\
                                            & \overset{(2.8)}{=\!\!\!=\!\!\!=\!\!\!=} & p(1-pq)xp + pq(1-p)xp \\
                                            & \overset{(2.4)}{=\!\!\!=\!\!\!=\!\!\!=} & (p-pqp)pxp.
                    \end{eqnarray*}

                    \underline{\emph{Step 3}}. $[pxp(p-pqp)]^*$ = $[px(1-pq)p]^*$ = $px(1-pq)p$ = $pxp(p-pqp)$.

                    \underline{\emph{Step 4}}. $pxp(p-pqp)pxp = pxp$. Indeed,
                    \begin{eqnarray*}
                           pxp(p-pqp)pxp & \overset{(2.3)}{=\!\!\!=\!\!\!=\!\!\!=} & px(1-pq)pxp + [px(1-pq)(1-p)]xp \\
                                         & = & px(1-pq)[p+(1-p)]xp \\
                                         & = & px(1-pq)xp = pxp.
                    \end{eqnarray*}

(3)$\Rightarrow$(1) Suppose $p-a = p-pqp \in R^{\dag}$. Let
                    $$x = [1 + b^*(p-a)](p-a)^{\dag}(1+b) - b^* - b^*b + 1-p.$$
                    We will show that $(1-pq)^{\dag} = x$ in accordance with the definition of MP inverse.

                    Firstly, note that
                    \begin{eqnarray*}
                          &   & (1-pq)x                                                                         \\
                          & = & (1-pq)p(p-a)^{\dag}(1+b)+(1-pq)b^*(p-a)(p-a)^{\dag}(1+b)   \\
                          &   &  -(1-pq)b^* -(1-pq)b^*b+(1-pq)(1-p)                            \\
                          & = & (p-a)(p-a)^{\dag}(1+b)+b^*(p-a)(p-a)^{\dag}(1+b) \\
                          &   &  -bb^*(p-a)(p-a)^{\dag}(1+b) -b^*+bb^*-b^*b+bb^*b+1-p-b,
                    \end{eqnarray*}
                    where $bb^* = (p-a)-(p-a)^{2}$ (see Lemma \ref{Lemma 2.2}(1)).
                    Hence
                    \begin{eqnarray}
                          (1-pq)x & = & (p-a)(p-a)^{\dag}+(p-a)(p-a)^{\dag}b+b^*(p-a)(p-a)^{\dag}  \nonumber \\
                                  &   &  +b^*(p-a)(p-a)^{\dag}b-b^*b-b^*-b+1-p.
                    \end{eqnarray}
                    Now, it is straightforward to check
                    \begin{eqnarray}
                           [(1-pq)x]^{*} = (1-pq)x.
                    \end{eqnarray}

                    Similarly, we have
                    \begin{eqnarray}
                           &   & x(1-pq)                                                             \nonumber \\
                           & = & (p-a)^{\dag}(1-pq+b)+b^*(p-a)(p-a)^{\dag}(1-pq+b) -b^*(1-pq+b)+1-p  \nonumber \\
                           & = & (p-a)^{\dag}(p-a)+b^*(p-a)-b^*(1-pqp)+1-p                           \nonumber \\
                           & = & (p-a)^{\dag}(p-a) + 1-p,
                    \end{eqnarray}
                    from which it is easy to see that
                    \begin{eqnarray}
                           [x(1-pq)]^{*} = x(1-pq).
                    \end{eqnarray}

                    Secondly, it follows from (2.9) that
                    \begin{eqnarray}
                           &   & (1-pq)x(1-pq)                                                            \nonumber\\
                           & = & (p-a)(p-a)^{\dag}(1-pq+b)+b^*(p-a)(p-a)^{\dag}(1-pq+b)                   \nonumber\\
                           &   &  -b^*(1-pq+b)-b+1-p                                                   \nonumber\\
                           & = & (p-a)+b^*(p-a)-b^*(p-a)-b+1-p                                      \nonumber\\
                           & = & p-a-b+1-p = 1-pq.
                    \end{eqnarray}

                    Finally, we have
                    \begin{eqnarray}
                          &   & x(1-pq)x                                                                        \nonumber\\
                          & \overset{(2.11)}{=\!\!=\!\!=\!\!=} & [(p-a)^{\dag}(p-a)+1-p]\{[1 + b^*(p-a)](p-a)^{\dag}(1+b) - b^* - b^*b + 1-p\}   \nonumber\\
                          & = & [1 + b^*(p-a)](p-a)^{\dag}(1+b) - b^* - b^*b + 1-p = x.
                    \end{eqnarray}

                    Combining (2.10), (2.12)-(2.14), one can see that
                    \begin{eqnarray}
                           (1-pq)^{\dag} = x = [1 + b^*(p-a)](p-a)^{\dag}(1+b) - b^* - b^*b + 1-p.
                    \end{eqnarray}

(2)$\Rightarrow$(3) Since $(1-pqp)^{*} = 1-pqp\in R^{\dag}$ and  $p(1-pqp) = (1-pqp)p$,
                    we have $p(1-pqp)^{\dag} = (1-pqp)^{\dag}p$ by \cite[Corollary 12]{Mary}.
                    One can check $(p-pqp)^{\dag} = p(1-pqp)^{\dag}$.

(3)$\Rightarrow$(2) It is trivial to verify that $(1-pqp)^{\dag} = (p-pqp)^{\dag}+1-p$.
\end{proof}

\begin{corollary}\label{corollary 2.5}
The following conditions are equivalent for any two projections $p$ and $q$ in a $*$-reducing ring $R$:

\emph{(1)} $1-pq\in R^{\dag}$,  \emph{(2)} $1-pqp\in R^{\dag}$, \emph{(3)} $p-pqp\in R^{\dag}$, \emph{(4)} $p-pq \in R^{\dag}$,
\emph{(5)} $p-qp\in R^{\dag}$;

\emph{(6)} $1-qp\in R^{\dag}$,  \emph{(7)} $1-qpq\in R^{\dag}$, \emph{(8)} $q-qpq\in R^{\dag}$, \emph{(9)} $q-qp \in R^{\dag}$,
\emph{(10)} $q-pq\in R^{\dag}$.\\
Moreover, $(p-pqp)^{\dag} = (1-pq)^{\dag}p$ when any one of these conditions is satisfied.
\end{corollary}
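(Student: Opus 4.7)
The plan is to add the four new conditions (4), (5), (9), (10) to the equivalence class already furnished by Theorem 2.4, and then pin down the formula by specializing the explicit expression (2.15) for $(1-pq)^{\dag}$. Theorem 2.4 already yields the equivalence of (1), (2), (3), (6), (7), (8) without invoking $*$-reducing. For the four new conditions I would apply Lemma \ref{Lemma 2.1}(2): in a $*$-reducing ring, $r\in R^{\dag}$ iff $rr^{*}\in R^{\dag}$ iff $r^{*}r\in R^{\dag}$. With $r=p-pq=p\overline{q}$, one computes $rr^{*}=p\overline{q}\,\overline{q}p=p\overline{q}p=p-pqp$, yielding (4)$\Leftrightarrow$(3); with $r=p-qp=\overline{q}p$, one has $r^{*}r=p-pqp$, yielding (5)$\Leftrightarrow$(3). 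The symmetric calculations with $r=q-qp=q\overline{p}$ and $r=q-pq=\overline{p}q$ give $rr^{*}=q-qpq$ and $r^{*}r=q-qpq$ respectively, yielding (9)$\Leftrightarrow$(8) and (10)$\Leftrightarrow$(8).

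For the formula, assume any one (hence all) of the conditions hold and right-multiply (2.15) by $p$. Since $bp=pq(1-p)p=0$, $b^{*}p=(1-p)qp^{2}=b^{*}$, $b^{*}bp=0$, and $(1-p)p=0$, the expression collapses to
$$(1-pq)^{\dag}p=[1+b^{*}(p-a)](p-a)^{\dag}p-b^{*}.$$
Because $p-a\in pRp$ is MP-invertible, its MP inverse also lies in $pRp$, so $(p-a)^{\dag}p=(p-a)^{\dag}$ and $(p-a)(p-a)^{\dag}p=(p-a)(p-a)^{\dag}$. Lemma \ref{Lemma 2.3}(1) gives $(p-a)(p-a)^{\dag}b=b$; taking adjoints yields $b^{*}(p-a)(p-a)^{\dag}=b^{*}$. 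Substituting, the two $b^{*}$ terms cancel, leaving $(1-pq)^{\dag}p=(p-a)^{\dag}=(p-pqp)^{\dag}$, as desired.

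The one step that requires a brief separate verification is the claim $(p-a)^{\dag}\in pRp$. It follows from $(1-p)(p-a)=0$, which forces $(1-p)(p-a)(p-a)^{\dag}=0$; self-adjointness of $1-p$ and of $(p-a)(p-a)^{\dag}$ then gives $(p-a)(p-a)^{\dag}(1-p)=0$, whence $(p-a)^{\dag}(1-p)=(p-a)^{\dag}(p-a)(p-a)^{\dag}(1-p)=0$, and the symmetric argument using $(p-a)(1-p)=0$ gives $(1-p)(p-a)^{\dag}=0$. I anticipate that this bookkeeping, together with correctly invoking Lemma \ref{Lemma 2.3}(1) after the right-multiplication by $p$, is the only delicate point; once those are in place, the rest is routine simplification of (2.15).
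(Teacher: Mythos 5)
Your proposal is correct and follows essentially the same route as the paper: the equivalences (1)--(3), (6)--(8) come from Theorem \ref{Theorem 2.4}, the new conditions are absorbed via Lemma \ref{Lemma 2.1} applied to $p\overline{q}(p\overline{q})^{*}=p-pqp$ (and its mirror images), and the formula is obtained by right-multiplying (2.15) by $p$ and invoking Lemma \ref{Lemma 2.3}(1). Your explicit check that $(p-a)^{\dag}\in pRp$ is a detail the paper leaves implicit, but it is the same argument in substance.
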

\begin{proof}
(1)$\Leftrightarrow$(2)$\Leftrightarrow$(3)$\Leftrightarrow$(6) has been proved in Theorem \ref{Theorem 2.4}.
We will prove (3)$\Leftrightarrow$(4)$\Leftrightarrow$(5).

(4)$\Leftrightarrow$(5) is obvious since $p-pq = (p-qp)^{*}$.

(3)$\Leftrightarrow$(4) Since $R$ is $*$-reducing and $p-pqp = p(1-q)p = p\overline{q}(p\overline{q})^{*}$,
                        it is easy to see that $p(1-q)\in R^{\dag}$ if and only if $p-pqp\in R^{\dag}$ by Lemma \ref{Lemma 2.1}.

Moreover, if any one of the above conditions is satisfied, then we have
\begin{eqnarray*}
       (1-pq)^{\dag}p & \overset{(2.15)}{=\!\!\!=\!\!\!=\!\!\!=\!\!\!=} & \{[1 + b^*(p-a)](p-a)^{\dag}(1+b) - b^* - b^*b + 1-p\}p                            \\
                      & = & (p-a)^{\dag}+b^*(p-a)(p-a)^{\dag}-b^*                            \\
                      & = & (p-a)^{\dag} = (p-pqp)^{\dag}. \quad\quad\quad\quad\quad\quad (\mbox{see Lemma \ref{Lemma 2.3}(1)})
\end{eqnarray*}
This completes the proof.
\end{proof}

By Lemma \ref{Lemma 2.1}(1),
one can see that (4)$\Rightarrow$(3) and (5)$\Rightarrow$(3) in Corollary \ref{corollary 2.5} are valid even if $R$ is not $*$-reducing.
However, the following example shows that (3) does not imply (4) or (5) in general if $R$ is not $*$-reducing.

\begin{example}\label{conterexample}
\emph{Let $R = \mathbb{Z}_{2}\langle x, y\rangle/(x^{2}-x, y^{2}-y, xyx)$
be the ring generated over $\mathbb{Z}_{2}$ by $\{x, y\}$ with the relations $\{x^{2}-x, y^{2}-y, xyx\}$.
Let $X = x+(x^{2}-x, y^{2}-y, xyx)$ and $Y = y+(x^{2}-x, y^{2}-y, xyx)$.
Define the involution on $R$ such that $1^* = 1$, $X^{*} = X$, $Y^{*} = Y$,
$(XY)^* = YX$, $(YX)^* = XY$ and
$(YXY)^* = YXY$.
Then $p = X$ and $q = 1- Y$ are projections.
In addition, $p(1-q)p = XYX = 0 \in R^{\dag}$.
But $p(1-q) = XY\not\in p(1-q)[p(1-q)]^*R = \{0\}$.
Therefore, $p(1-q)\not\in R^{\dag}$ in according to \cite[Theorem 5.4]{Koliha Patricio}.}
\end{example}

Replacing $p$ and $q$ by $\overline{p} = 1-p$ and $\overline{q} = 1-q$ respectively in Corollary \ref{corollary 2.5},
we obtain the following corollary.

\begin{corollary}\label{Corollary 2.6}
The following are equivalent for any two projections $p$ and $q$ in a $*$-reducing ring $R$:

$\begin{array}{llll}
(1)\ p+q-pq\in R^{\dag},        & (2)\ p+\overline{p}q\overline{p}\in R^{\dag}, & (3)\ \overline{p}q\overline{p}\in R^{\dag},   & (4)\ q-pq\in R^{\dag},  \\
(5)\ q-qp\in R^{\dag},          & (6)\ p+q-qp\in R^{\dag},                      & (7)\ q+\overline{q}p\overline{q}\in R^{\dag}, & (8)\ \overline{q}p\overline{q}\in R^{\dag}, \\
(9)\ p-qp\in R^{\dag},          & (10)\ p-pq\in R^{\dag}.                       &                                               &
\end{array}$
\end{corollary}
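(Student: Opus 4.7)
The plan is to deduce this corollary directly from Corollary \ref{corollary 2.5} by substituting $\overline{p} = 1-p$ for $p$ and $\overline{q} = 1-q$ for $q$. Since $p^2 = p$ and $p^* = p$ imply $\overline{p}^2 = \overline{p}$ and $\overline{p}^* = \overline{p}$, and similarly for $\overline{q}$, the pair $(\overline{p}, \overline{q})$ is another pair of projections in $R$ and the hypothesis that $R$ is $*$-reducing is preserved. Thus Corollary \ref{corollary 2.5} applies verbatim to $(\overline{p}, \overline{q})$, yielding ten equivalent conditions which I will match against the ten listed in the present statement.

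The core of the argument is the following pointwise translation, which is just elementary expansion: $1 - \overline{p}\,\overline{q} = 1 - (1-p)(1-q) = p + q - pq$; $\overline{p}\,\overline{q}\,\overline{p} = \overline{p} - \overline{p}q\overline{p}$, so $1 - \overline{p}\,\overline{q}\,\overline{p} = p + \overline{p}q\overline{p}$ and $\overline{p} - \overline{p}\,\overline{q}\,\overline{p} = \overline{p}q\overline{p}$; $\overline{p} - \overline{p}\,\overline{q} = (1-p)q = q - pq$; and $\overline{p} - \overline{q}\,\overline{p} = q(1-p) = q - qp$. These four calculations handle conditions (1)-(5) of Corollary \ref{corollary 2.5} and turn them into conditions (1)-(5) of the present corollary. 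The analogous calculations obtained by exchanging the roles of $p$ and $q$ (equivalently, applying the involution) transform conditions (6)-(10) of Corollary \ref{corollary 2.5} into conditions (6)-(10) here.

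Concretely, the steps I would carry out are: first, verify the projection and $*$-reducing properties are preserved under the substitution; second, expand each of the ten expressions in Corollary \ref{corollary 2.5} after the substitution and identify it with the corresponding expression in the present statement; third, conclude equivalence by invoking Corollary \ref{corollary 2.5}. There is no genuine obstacle here beyond clean bookkeeping: the substitution is a formal operation, and the ten algebraic identifications are one-line expansions. The only minor point to check carefully is that the ordering of the ten items in the two corollaries corresponds correctly, so that (for example) condition (3) of the present statement truly matches the condition obtained from item (3) of Corollary \ref{corollary 2.5} after substitution; but since both lists are organized symmetrically with respect to the $p \leftrightarrow q$ swap, this matching is immediate.
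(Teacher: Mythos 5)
Your proposal is correct and is exactly the paper's argument: the paper proves Corollary \ref{Corollary 2.6} by the same substitution $p\mapsto\overline{p}$, $q\mapsto\overline{q}$ in Corollary \ref{corollary 2.5}, and your ten expansions (e.g.\ $1-\overline{p}\,\overline{q}=p+q-pq$, $\overline{p}-\overline{p}\,\overline{q}\,\overline{p}=\overline{p}q\overline{p}$) match the intended identifications. Nothing is missing.
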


\begin{theorem}\label{Theorem 2.7}
The following statements are equivalent for any two projections $p$ and $q$ in a ring $R$ with involution:

\emph{(1)} $p(1-q)\in R^{\dag}$ and $(1-p)q\in R^{\dag}$;

\emph{(2)} $p-q\in R^{\dag}$.
\end{theorem}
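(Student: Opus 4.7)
The plan is to deduce (1)~$\Rightarrow$~(2) directly from Lemma~\ref{Lemma 2.3}(4), which already records both the implication and an explicit formula for $(p-q)^\dag$. All the substantive work lies in the converse (2)~$\Rightarrow$~(1). The principal obstacle is that $R$ is \emph{not} assumed to be $*$-reducing, so Lemma~\ref{Lemma 2.1}(2) is unavailable; indeed, Example~\ref{conterexample} shows that $rr^*\in R^{\dag}$ need not imply $r\in R^{\dag}$ in general. Consequently the argument must exhibit the MP inverse of $p\overline{q}$ explicitly rather than rely on an abstract existence result.

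For (2)~$\Rightarrow$~(1), set $x=p-q$. Since $x^*=x$ and $x\in R^{\dag}$, the element $x$ is EP, so $xx^\dag=x^\dag x$; this yields in particular the two handy identities $x(x^\dag)^2 = x^\dag$ and $x^2(x^\dag)^2 = xx^\dag$. The first step is a commutation identity obtained by expanding $(p-q)^2 = p+q-pq-qp$:
\[
p(p-q)^2 = (p-q)^2\,p = p\overline{q}p,\qquad \overline{p}(p-q)^2 = (p-q)^2\,\overline{p} = \overline{p}q\overline{p}.
\]
In particular $p$ commutes with $x^2$, and hence with $(x^2)^\dag=(x^\dag)^2$ by \cite[Corollary~12]{Mary}. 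A short verification of the four Penrose equations then yields $(p\overline{q}p)^{\dag} = p(x^\dag)^2$.

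The crux is to lift this to MP-invertibility of $p\overline{q}$ itself. I would take as candidate
\[
(p\overline{q})^{\dag} \;:=\; (p\overline{q})^{*}\bigl(p\overline{q}(p\overline{q})^{*}\bigr)^{\dag} \;=\; \overline{q}p\,(x^\dag)^2
\]
and verify the four Penrose equations for the pair $(p\overline{q},\,\overline{q}p(x^\dag)^2)$ by direct computation. The key simplification is the identity $p\overline{q}=x\overline{q}$, which is immediate from $(p-q)(1-q)=p-pq$; combined with $xx^\dag x=x$, the self-adjointness of $x^\dag$, and the EP identities above, all four Penrose checks reduce to one-line verifications (for instance, Penrose~(1) boils down to $xx^\dag\cdot x\overline{q}=x\overline{q}$). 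Finally, MP-invertibility of $\overline{p}q$ follows by symmetry: interchanging $p$ and $q$ preserves the hypothesis, since $q-p=-(p-q)\in R^{\dag}$, and the same argument yields $q\overline{p}\in R^{\dag}$, whence $\overline{p}q=(q\overline{p})^{*}\in R^{\dag}$.
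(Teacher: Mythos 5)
Your proposal is correct and follows essentially the same route as the paper: since $\overline{q}p(x^\dag)^2 = (p-q)(p-q^{})p\,(x^\dag)^2\!\mid_{\overline{q}p=xp} = x(x^\dag)^2p = (p-q)^{\dag}p$, your candidate coincides with the paper's, and both arguments verify the four Penrose equations directly using the commutation $p[(p-q)^{\dag}]^2=[(p-q)^{\dag}]^2p$ obtained from \cite[Corollary 12]{Mary}, with (1)$\Rightarrow$(2) cited from Lemma~\ref{Lemma 2.3}(4) in both cases.
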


\begin{proof}
(1)$\Rightarrow$(2) By Lemma \ref{Lemma 2.3}(4).

(2)$\Rightarrow$(1) It follows that $(p-q)^{\dag}(p-q) = (p-q)(p-q)^{\dag}$ since $p-q\in R^{\dag}$ and $(p-q)^{*} = p-q$.
                    Now, it is easy to check that $[(p-q)^{2}]^{\dag} = [(p-q)^{\dag}]^{2}$.
                    Also note that $[(p-q)^{2}]^{*} = (p-q)^{2}$ and $p(p-q)^{2} = (p-q)^{2}p = p-pqp$.
                    By \cite[Corollary 12]{Mary}, we have $p[(p-q)^{2}]^{\dag} = [(p-q)^{2}]^{\dag}p$,
                    i.e., $p[(p-q)^{\dag}]^{2} = [(p-q)^{\dag}]^{2}p$.
                    Let $x = (p-q)^{\dag}p$.
                    We will prove that $(p\overline{q})^{\dag} = x$.

                    First, $(p\overline{q})x = p(p-q)(p-q)^{\dag}p$ implies $[(p\overline{q})x]^{*} = (p\overline{q})x$.

                    Moreover, we have
                    \begin{eqnarray*}
                           (p\overline{q})x(p\overline{q}) & = & p(p-q)(p-q)^{\dag}pp(p-q) \\
                                                           & = & p(p-q)(p-q)(p-q)^{\dag}(p-q)^{\dag}pp(p-q)\\
                                                           & = & p(p-q)(p-q)(p-q)^{\dag}(p-q)^{\dag}(p-q) \\
                                                           & = & p(p-q) = p\overline{q}.
                    \end{eqnarray*}

                    Furthermore, $x(p\overline{q}) = (p-q)^{\dag}p(p-q)$ implies
                    \begin{eqnarray*}
                           [x(p\overline{q})]^{*} & = & [(p-q)^{\dag}p(p-q)]^* = (p-q)p(p-q)^{\dag}  \\
                                                  & = & (p-q)p(p-q)^{\dag}(p-q)^{\dag}(p-q)  \\
                                                  & = & (p-q)(p-q)^{\dag}(p-q)^{\dag}p(p-q)                      \\
                                                  & = & (p-q)^{\dag}p(p-q) = x(p\overline{q}),
                    \end{eqnarray*}
                    and hence
                    \begin{eqnarray*}
                           x(p\overline{q})x & = & [x(p\overline{q})]^{*}x = [(p-q)^{\dag}p(p-q)]^*(p-q)^{\dag}p \\
                                             & = &(p-q)p(p-q)^{\dag}(p-q)^{\dag}p \\
                                             & = & (p-q)(p-q)^{\dag}(p-q)^{\dag}p \\
                                             & = & (p-q)^{\dag}p = x.
                    \end{eqnarray*}

                    This proves $(p\overline{q})^{\dag} = (p-q)^{\dag}p$. \hfill (2.16)

                    Thus, $\overline{p}-\overline{q} = -(p-q)\in R^{\dag}$ implies $\overline{p}q\in R^{\dag}$.
\end{proof}

In case $R$ is $*$-reducing, $p(1-q)\in R^{\dag}$ if and only if $(1-p)q\in R^{\dag}$ by Corollary \ref{corollary 2.5}(3)$\Leftrightarrow$(9).
Whence we have the following corollary.

\begin{corollary}\label{corollary 2.8}
The following conditions are equivalent for any two projections $p$ and $q$ in a $*$-reducing ring $R$:

\emph{(1)} $p(1-q)\in R^{\dag}$;

\emph{(2)} $p-q\in R^{\dag}$;

\emph{(3)} $(1-p)q\in R^{\dag}$.
\end{corollary}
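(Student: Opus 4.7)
The plan is to combine Theorem~\ref{Theorem 2.7} with Corollary~\ref{corollary 2.5}. Theorem~\ref{Theorem 2.7} already shows, without any assumption on $R$, that $p-q\in R^{\dag}$ if and only if \emph{both} $p(1-q)\in R^{\dag}$ and $(1-p)q\in R^{\dag}$. So to upgrade this to the present corollary it suffices to check that, in a $*$-reducing ring, (1) and (3) are individually equivalent; once either of them forces the other, Theorem~\ref{Theorem 2.7} immediately delivers both directions involving (2).

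For $(1)\Leftrightarrow(3)$ the cleanest route is to observe that $p(1-q)=p-pq$ and $(1-p)q=q-pq$ are precisely items (4) and (10) of Corollary~\ref{corollary 2.5}, which that corollary lists as equivalent in any $*$-reducing ring. An equally good alternative is to argue directly via Lemma~\ref{Lemma 2.1}(2): in a $*$-reducing ring $r\in R^{\dag}$ as soon as $rr^{*}\in R^{\dag}$, so applying this with $r=p(1-q)$ and with $r=(1-p)q$ reduces (1) and (3) respectively to the self-adjoint conditions $p-pqp\in R^{\dag}$ and $q-qpq\in R^{\dag}$, i.e.\ items (3) and (8) of Corollary~\ref{corollary 2.5}, which are in turn equivalent by that corollary. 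This is the step that genuinely uses the $*$-reducing hypothesis, and it is also the only place where it enters.

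With $(1)\Leftrightarrow(3)$ in hand, the implications $(2)\Rightarrow(1)$ and $(2)\Rightarrow(3)$ are immediate from Theorem~\ref{Theorem 2.7}, while conversely either of (1) or (3) alone supplies both hypotheses of Theorem~\ref{Theorem 2.7} and hence yields (2). I expect no real obstacle: the whole content of the corollary is that the $*$-reducing assumption collapses the pair of hypotheses in Theorem~\ref{Theorem 2.7} into a single one. The only thing to be careful about is to quote the item numbers of Corollary~\ref{corollary 2.5} correctly.
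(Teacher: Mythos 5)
Your proposal is correct and follows essentially the same route as the paper: the paper likewise derives $(1)\Leftrightarrow(3)$ from the equivalences in Corollary~\ref{corollary 2.5} (using the $*$-reducing hypothesis exactly where you do, via Lemma~\ref{Lemma 2.1}) and then invokes Theorem~\ref{Theorem 2.7} to bring in condition (2). Your item references to Corollary~\ref{corollary 2.5} are accurate, so there is nothing to fix.
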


Let $p$ and $q$ be projections in a $*$-reducing ring $R$.
It is clear that all the conditions in Corollary \ref{corollary 2.5}, \ref{Corollary 2.6} and \ref{corollary 2.8}
are mutually equivalent.
The following corollary shows the relations among those MP inverses when they exist.

\begin{corollary}\label{Corollary 2.9}
Let $p$ and $q$ be projections in a $*$-reducing ring $R$.
If any one of the conditions in Corollary \emph{\ref{corollary 2.5}, \ref{Corollary 2.6}} and \emph{\ref{corollary 2.8}}
is satisfied, then

\emph{(1)}  $(1-pq)^{\dag}p\overline{q}p = p\overline{q}(p\overline{q}p)^{\dag} = p\overline{q}p(1-qp)^{\dag}
                                         = p(p\overline{q})^{\dag} = (\overline{q}p)^{\dag}p = p(p-q)^{\dag}p$;

\emph{(2)} $(p+\overline{p}q)^{\dag}\overline{p}q\overline{p} = (q+p\overline{q})^{\dag}\overline{p}q\overline{p}
                                                              = \overline{p}q(\overline{p}q\overline{p})^{\dag}
                                                              = \overline{p}q\overline{p}(p+q\overline{p})^{\dag}
                                                              = \overline{p}q\overline{p}(q+\overline{q}p)^{\dag}
                                                              = \overline{p}(\overline{p}q)^{\dag}
                                                              = (q\overline{p})^{\dag}\overline{p}
                                                              =  \overline{p}(q-p)^{\dag}\overline{p}$.
\end{corollary}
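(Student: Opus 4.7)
The plan is to show that all six quantities in part (1) collapse onto the single projection $e := (p\overline{q}p)(p\overline{q}p)^{\dag}$. Under the assumed hypothesis, $p\overline{q}p = p-pqp$ is MP-invertible by Corollary \ref{corollary 2.5}, and being self-adjoint it is EP, so $(p\overline{q}p)^{\dag} \in pRp$ and it commutes with $p\overline{q}p$. Concretely, $p(p\overline{q}p)^{\dag} = (p\overline{q}p)^{\dag}p = (p\overline{q}p)^{\dag}$ and $(p\overline{q}p)(p\overline{q}p)^{\dag} = (p\overline{q}p)^{\dag}(p\overline{q}p) = e$, with $e^{*} = e$.

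First I would record three building-block identities. Lemma \ref{Lemma 2.1}(1) applied to $r = p\overline{q}$ (so $rr^{*} = p\overline{q}p$) yields $(p\overline{q})^{\dag} = \overline{q}p(p\overline{q}p)^{\dag}$, whose involution gives $(\overline{q}p)^{\dag} = (p\overline{q}p)^{\dag}p\overline{q}$. Corollary \ref{corollary 2.5} supplies $(1-pq)^{\dag}p = (p\overline{q}p)^{\dag}$, whose involution is $p(1-qp)^{\dag} = (p\overline{q}p)^{\dag}$. Equation (2.16) in the proof of Theorem \ref{Theorem 2.7} gives $(p\overline{q})^{\dag} = (p-q)^{\dag}p$, whose involution is $(\overline{q}p)^{\dag} = p(p-q)^{\dag}$. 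Each expression in part (1) now collapses to $e$ in one line: $(1-pq)^{\dag}p\overline{q}p = (p\overline{q}p)^{\dag}\overline{q}p = (p\overline{q}p)^{\dag}p\overline{q}p = e$; $\ p\overline{q}(p\overline{q}p)^{\dag} = p\overline{q}\cdot p(p\overline{q}p)^{\dag} = (p\overline{q}p)(p\overline{q}p)^{\dag} = e$; $\ p(p\overline{q})^{\dag} = p\overline{q}p(p\overline{q}p)^{\dag} = e$; $\ (\overline{q}p)^{\dag}p = (p\overline{q}p)^{\dag}p\overline{q}p = e$; and $p(p-q)^{\dag}p = (\overline{q}p)^{\dag}p = e$. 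The remaining quantity $p\overline{q}p(1-qp)^{\dag}$ is the involution of $(1-pq)^{\dag}p\overline{q}p = e$, so it too equals $e^{*} = e$.

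Part (2) then follows from part (1) by the substitution $p \mapsto \overline{p}$, $q \mapsto \overline{q}$. The hypothesis is invariant under this substitution because all the conditions in Corollaries \ref{corollary 2.5}, \ref{Corollary 2.6}, \ref{corollary 2.8} are equivalent to $p-q \in R^{\dag}$, and $\overline{p}-\overline{q} = -(p-q)$ is MP-invertible iff $p-q$ is. The translations $1-\overline{p}\,\overline{q} = p+q-pq = p+\overline{p}q = q+p\overline{q}$, $\ 1-\overline{q}\,\overline{p} = p+q-qp = p+q\overline{p} = q+\overline{q}p$, and $\overline{p}-\overline{q} = q-p$ then convert each of the six expressions in part (1) into precisely the corresponding expression in part (2). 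I do not foresee a serious obstacle; the only real subtlety is the membership $(p\overline{q}p)^{\dag} \in pRp$ furnished by the EP property, which is what unlocks every one of the short calculations above.
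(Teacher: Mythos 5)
Your proof is correct and takes essentially the same route as the paper: both arguments rest on Corollary \ref{corollary 2.5}'s identity $(1-pq)^{\dag}p=(p\overline{q}p)^{\dag}$, the fact that the self-adjoint MP-invertible element $p\overline{q}p$ commutes with its MP inverse (so $(p\overline{q}p)^{\dag}\in pRp$), equation (2.16), and the substitution $p\mapsto\overline{p}$, $q\mapsto\overline{q}$ for part (2). The only minor divergence is that you collapse all six expressions onto the projection $e=(p\overline{q}p)(p\overline{q}p)^{\dag}$ and obtain $p(p-q)^{\dag}p$ from (2.16) combined with Lemma \ref{Lemma 2.1}(1), whereas the paper derives that term from the explicit formula (2.1) for $(p-q)^{\dag}$ coming from Lemma \ref{Lemma 2.3}(4).
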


\begin{proof}
(1) First of all, we have $(p\overline{q}p)^{\dag} = (p-pqp)^{\dag} = (1-pq)^{\dag}p$ by Corollary \ref{corollary 2.5}.
    Hence
    $$(1-pq)^{\dag}p\overline{q}p = (p\overline{q}p)^{\dag}p\overline{q}p
                                  = p\overline{q}p(p\overline{q}p)^{\dag}
                                  = p\overline{q}(p\overline{q}p)^{\dag}. $$
    Consequently,
    $$p\overline{q}(p\overline{q}p)^{\dag} =  p\overline{q}p(p\overline{q}p)^{\dag}
                                           = [p\overline{q}p(p\overline{q}p)^{\dag}]^{*}
                                           = [(1-pq)^{\dag}p\overline{q}p]^{*} = p\overline{q}p(1-qp)^{\dag}.$$

    Next, it follows by (2.1) that
    $$p\overline{q}(p\overline{q}p)^{\dag} = p\overline{q}(p\overline{q}p)^{\dag}p
                                           = p[\overline{q}(p\overline{q}p)^{\dag}-q(\overline{p}q\overline{p})^{\dag}]p
                                           \overset{(2.1)}{=\!\!\!=\!\!\!=\!\!\!=} p(p-q)^{\dag}p.$$
    Finally, from (2.16) we can see that
    $p(p\overline{q})^{\dag} = p(p-q)^{\dag}p = (\overline{q}p)^{\dag}p.$

(2) Replace $p$ and $q$, respectively, by $1-p$ and $1-q$ in (1).
\end{proof}

In order to investigate the MP invertibility of the commutator $pq-qp$ and the anti-commutator $pq+qp$,
we need the following three lemmas.

\begin{lemma}\label{lemma 2.10}
The following conditions are equivalent for any two projections $p$ and $q$ in a $*$-reducing ring $R$:

\emph{(1)} $(1-p)(1-q)\in R^{\dag}$;

\emph{(2)} $1-p-q\in R^{\dag}$;

\emph{(3)} $pq\in R^{\dag}$.
\end{lemma}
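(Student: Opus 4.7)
The plan is to deduce this lemma directly from Corollary~\ref{corollary 2.8} by a change of projection, without needing any new calculation. The key point is that if $p$ is a projection then so is $\overline{p}=1-p$ (since $(1-p)^2 = 1-p$ and $(1-p)^* = 1-p$), so Corollary~\ref{corollary 2.8} may be legitimately applied to the pair of projections $(1-p,\, q)$ in place of $(p,\, q)$.

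Carrying out this substitution, the three conditions of Corollary~\ref{corollary 2.8} become
\begin{itemize}
\item[(1$'$)] $(1-p)(1-q)\in R^{\dag}$,
\item[(2$'$)] $(1-p)-q = 1-p-q \in R^{\dag}$,
\item[(3$'$)] $\bigl(1-(1-p)\bigr)q = pq \in R^{\dag}$,
\end{itemize}
which are exactly the three conditions of the current lemma. Hence the equivalences (1)$\Leftrightarrow$(2)$\Leftrightarrow$(3) are immediate.

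There is no real obstacle; the substantive work has already been done in Theorem~\ref{Theorem 2.4} and in the passage from $p(1-q)p$ to $p(1-q)$ that uses the $*$-reducing hypothesis (Lemma~\ref{Lemma 2.1}(2)). If one preferred a self-contained argument one could instead write $1-p-q = \overline{p} - q$ and $(1-p)(1-q) = \overline{p}\,\overline{q}$ and re-run the proof of Theorem~\ref{Theorem 2.7} and Corollary~\ref{corollary 2.8} with $\overline{p}$ in place of $p$, but this would just be reproducing the same reasoning. The short referral to Corollary~\ref{corollary 2.8} is the cleanest presentation.
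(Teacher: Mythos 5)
Your proposal is correct and is exactly the paper's own argument: the paper proves Lemma~\ref{lemma 2.10} by the same one-line substitution of $1-p$ for $p$ in Corollary~\ref{corollary 2.8}, which is legitimate since $1-p$ is again a projection. Nothing further is needed.
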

\begin{proof}
Substitute $1-p$ for $p$ in Corollary \ref{corollary 2.8}.
\end{proof}

\begin{lemma}\label{Lemma 2.11}
Let $b = pq(1-p)$, where $p$ and $q$ be two projections in $R$,
then $b-b^*\in R^{\mathrm{d}}$ if and only if $bb^*\in R^{\mathrm{d}}$.
In this case, $\mathrm{ind}(bb^{*})$ $\leq$ $\mathrm{ind}[(b-b^{*})^{2}]$.
\end{lemma}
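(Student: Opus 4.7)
The plan is to reduce both claims to the identity $(b-b^{*})^{2}=-bb^{*}-b^{*}b$ together with the mutual orthogonality of the two summands. Setting $c=(b-b^{*})^{2}$, and using $b=pq\overline{p}$, $b^{*}=\overline{p}qp$, and $p\overline{p}=\overline{p}p=0$, one computes directly $c=-bb^{*}-b^{*}b$, $bb^{*}\cdot b^{*}b=b^{*}b\cdot bb^{*}=0$, $pc=cp=-bb^{*}$, and $\overline{p}c=c\overline{p}=-b^{*}b$. Thus the orthogonal idempotents $p$ and $\overline{p}$ both commute with $c$, and the summands $bb^{*}$ and $b^{*}b$ annihilate each other on both sides.

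For the ``only if'' direction together with the index inequality, I would suppose $b-b^{*}\in R^{\mathrm{d}}$. Since Drazin invertibility passes to integer powers, $c\in R^{\mathrm{d}}$. Because $p$ commutes with $c$, the standard fact that Drazin inverses lie in the double centralizer of the element gives $pc^{\mathrm{d}}=c^{\mathrm{d}}p$. A routine verification of the three Drazin identities then shows that $pc^{\mathrm{d}}$ is the Drazin inverse of $pc=-bb^{*}$, with Drazin index no larger than $\mathrm{ind}(c)$. This yields both $bb^{*}\in R^{\mathrm{d}}$ and $\mathrm{ind}(bb^{*})\leq\mathrm{ind}((b-b^{*})^{2})$.

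For the converse, suppose $bb^{*}\in R^{\mathrm{d}}$. Cline's formula for the Drazin inverse, applied to the two factorizations $bb^{*}=b\cdot b^{*}$ and $b^{*}b=b^{*}\cdot b$, yields $b^{*}b\in R^{\mathrm{d}}$. Writing $u=(bb^{*})^{\mathrm{d}}$ and $v=(b^{*}b)^{\mathrm{d}}$, the orthogonality $bb^{*}\cdot b^{*}b=b^{*}b\cdot bb^{*}=0$ propagates through the defining Drazin identities to the cross-annihilations $u\cdot b^{*}b=b^{*}b\cdot u=v\cdot bb^{*}=bb^{*}\cdot v=0$, and hence $uv=vu=0$. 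A direct check then shows $-u-v$ is the Drazin inverse of $c=-bb^{*}-b^{*}b$, so $c\in R^{\mathrm{d}}$; since Drazin invertibility of $c=(b-b^{*})^{2}$ implies Drazin invertibility of $b-b^{*}$, the conclusion follows.

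The main technical point is the cross-annihilation $u\cdot b^{*}b=0$, which I would verify as $u\cdot b^{*}b=u(bb^{*})u\cdot b^{*}b=u^{2}(bb^{*})(b^{*}b)=0$, invoking in turn the identity $u=u(bb^{*})u$, the commutation $u(bb^{*})=(bb^{*})u$, and $bb^{*}\cdot b^{*}b=0$. The three other cross-annihilations follow symmetrically, and once they are in place, both directions of the equivalence and the index bound reduce to routine Drazin-inverse bookkeeping as outlined above.
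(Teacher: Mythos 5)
Your proof is correct. Note that the paper itself does not prove this lemma at all: its ``proof'' is a citation to Lemma 2.6 of the Chen--Zhu preprint (arXiv:1307.3815), so your argument is a self-contained substitute rather than a rederivation of anything printed here. Your route is the natural one and all the steps check out: from $b=pq\overline{p}$ one indeed gets $b^{2}=(b^{*})^{2}=0$, hence $c:=(b-b^{*})^{2}=-(bb^{*}+b^{*}b)$ with $bb^{*}\cdot b^{*}b=b^{*}b\cdot bb^{*}=0$ and $pc=cp=-bb^{*}$, $\overline{p}c=c\overline{p}=-b^{*}b$; the double-commutant property of the Drazin inverse then legitimizes $pc^{\mathrm{d}}=c^{\mathrm{d}}p$, and your verification that $pc^{\mathrm{d}}$ is a Drazin inverse of $pc=-bb^{*}$ of index at most $\mathrm{ind}(c)$ gives both $bb^{*}\in R^{\mathrm{d}}$ and the stated index bound (and, since the bound only needs $c\in R^{\mathrm{d}}$, it holds whenever either equivalent condition does). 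For the converse, Cline's formula (valid in any ring) gives $b^{*}b\in R^{\mathrm{d}}$ from $bb^{*}\in R^{\mathrm{d}}$, your cross-annihilation computations $u\,b^{*}b=b^{*}b\,u=v\,bb^{*}=bb^{*}v=0$ are right, the check that $-(u+v)$ Drazin-inverts $c$ goes through with $k=\max(\mathrm{ind}(bb^{*}),\mathrm{ind}(b^{*}b))$, and the final appeal to ``$a^{2}\in R^{\mathrm{d}}$ implies $a\in R^{\mathrm{d}}$'' is the standard power characterization going back to Drazin. What your write-up buys, compared with the paper, is independence from an unpublished external lemma; the ingredients you use (the identity $c=-(bb^{*}+b^{*}b)$, orthogonality of the two corners, Cline's formula) are very much in the spirit of the cited source, so this is best viewed as filling in an omitted proof rather than taking a genuinely different mathematical route.
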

\begin{proof}
See \cite[Lemma 2.6]{Chen Zhu}.
\end{proof}

\begin{lemma}\label{Lemma 2.12}
Let $r\in R$. If $r+r^{2}\in R^{\mathrm{d}}$ $($resp., $r-r^{2}\in R^{\mathrm{d}}$$)$, then $r\in R^{d}$ and $\mathrm{ind}(r)\leq \mathrm{ind}(r+r^{2})$
$($resp., $\mathrm{ind}(r)\leq \mathrm{ind}(r-r^{2})$$)$.
\end{lemma}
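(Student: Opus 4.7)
The plan is to reduce to the classical criterion for Drazin invertibility: if some $b\in R$ commutes with $r$ and satisfies $r^k=r^{k+1}b=br^{k+1}$, then $r\in R^{\mathrm{d}}$ with $\mathrm{ind}(r)\leq k$ (indeed $r^{\mathrm{d}}=b^{k+1}r^k$ works, by a short induction on the relation $r^k=r^{k+1}b$). I would prove only the $r+r^2$ case; the $r-r^2$ case follows at once by substituting $-r$ for $r$, since $(-r)+(-r)^2=-(r-r^2)$ and $(-r)^{\mathrm{d}}=-r^{\mathrm{d}}$.

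Let $s:=r+r^2$, $k:=\mathrm{ind}(s)$ and $x:=s^{\mathrm{d}}$. Because $s$ is a polynomial in $r$, $r$ commutes with $s$ and hence with $x$, so every element appearing below lies in a commutative subring of $R$ generated by $r$ and $x$. The Drazin relation gives $s^{k+1}x=s^k$.

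The crux is the substitution $s=r(1+r)$, yielding $s^m=r^m(1+r)^m$, combined with the binomial splitting $(1+r)^k=1+rh(r)$, where $h(r):=k+\binom{k}{2}r+\cdots+r^{k-1}$. Inserting these into $s^{k+1}x=s^k$ and subtracting the $r^k$ contributed by the ``$1$'' in $(1+r)^k$ produces
\[
r^k=r^{k+1}\bigl[(1+r)^{k+1}x-h(r)\bigr].
\]
Setting $b:=(1+r)^{k+1}x-h(r)$, one gets $r^k=r^{k+1}b$; since $b$ is a polynomial in $r$ and $x$ it commutes with $r$, so also $r^k=br^{k+1}$, and the criterion from the first paragraph delivers $r\in R^{\mathrm{d}}$ with $\mathrm{ind}(r)\leq k$.

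No real obstacle is expected: once the factorization $s=r(1+r)$ is noticed and $(1+r)^k$ is split as $1+rh(r)$, the argument reduces to a single algebraic identity. The only tacit ingredient is that $x=s^{\mathrm{d}}$ commutes with every element commuting with $s$ (and so with $r$), which is a standard property of the Drazin inverse.
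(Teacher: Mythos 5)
Your argument is correct, and the verification goes through: the criterion $r^k=r^{k+1}b=br^{k+1}$ with $b$ commuting with $r$ does yield $r\in R^{\mathrm{d}}$, $\mathrm{ind}(r)\le k$ (with $r^{\mathrm{d}}=b^{k+1}r^k$), the identity $r^k=r^{k+1}\bigl[(1+r)^{k+1}s^{\mathrm{d}}-h(r)\bigr]$ follows from $s^{k+1}s^{\mathrm{d}}=s^k$ and the splitting $(1+r)^k=1+rh(r)$, and the commutation of $s^{\mathrm{d}}$ with $r$ is Drazin's double-commutant property. Note that the paper itself gives no proof of this lemma --- it only cites Lemma 2.7 of Chen and Zhu --- so there is no internal argument to compare with; your self-contained proof is essentially the standard one behind that citation (factor $r+r^{2}=r(1+r)$, peel off $r^k$ by the binomial expansion, apply the $r^k\in r^{k+1}R\cap Rr^{k+1}$-type criterion), and the reduction of the $r-r^{2}$ case via $r\mapsto -r$ is fine.
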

\begin{proof}
See \cite[Lemma 2.7]{Chen Zhu}.
\end{proof}

\begin{theorem}\label{Theorem 2.13}
The following conditions are equivalent for any two projections $p$ and $q$ in a $*$-reducing ring $R$:

\emph{(1)} $pq-qp\in R^{\dag}$;

\emph{(2)} $pq\in R^{\dag}$ and $p-q\in R^{\dag}$.
\end{theorem}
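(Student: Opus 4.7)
For $(1) \Rightarrow (2)$ I will start from the observation that $x := pq - qp$ is skew-adjoint, i.e.\ $x^* = -x$. For any skew-adjoint MP invertible element, $(x^\dag)^* = (x^*)^\dag = -x^\dag$, and hence $xx^\dag = (xx^\dag)^* = (x^\dag)^* x^* = x^\dag x$, which forces $x \in R^\sharp$ with $x^\sharp = x^\dag$ and $\mathrm{ind}(x) \leq 1$; consequently $\mathrm{ind}((b-b^*)^2) \leq 1$. Lemma \ref{Lemma 2.11} then yields $bb^* \in R^{\mathrm{d}}$ with $\mathrm{ind}(bb^*) \leq 1$, so $bb^* \in R^\sharp$, and since $bb^*$ is self-adjoint, $bb^* \in R^\dag$. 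By Lemma \ref{Lemma 2.2}(1), $bb^* = a - a^2 = c - c^2$ with $a = pqp$ and $c = p - a$ both self-adjoint, so Lemma \ref{Lemma 2.12} applied to each gives $a, c \in R^\sharp \subseteq R^\dag$. The $*$-reducing hypothesis together with Lemma \ref{Lemma 2.1}(2) then upgrades $pqp = (pq)(pq)^* \in R^\dag$ to $pq \in R^\dag$ and $p\overline{q}p = (p\overline{q})(p\overline{q})^* \in R^\dag$ to $p\overline{q} \in R^\dag$, and Corollary \ref{corollary 2.8} finishes off $p-q \in R^\dag$.

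For $(2) \Rightarrow (1)$ I first collect all the MP invertibilities I will need. From $pq \in R^\dag$ and Lemma \ref{Lemma 2.1}, $a = pqp \in R^\dag$; Lemma \ref{lemma 2.10} gives $\overline{p}\,\overline{q} \in R^\dag$, whence $\overline{p} - d = \overline{p}\,\overline{q}\,\overline{p} \in R^\dag$. From $p-q \in R^\dag$ and Corollary \ref{corollary 2.8}, both $p\overline{q}$ and $\overline{p}q$ are MP invertible, so $c = p\overline{q}p$ and $d = \overline{p}q\overline{p}$ are too. Theorem \ref{Theorem 2.4}, applied to the pair $p,q$ and to the pair $\overline{p},q$, then delivers $1-a \in R^\dag$ and $1-d \in R^\dag$. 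Since $a$ and $1-a$ are commuting self-adjoint MP invertible elements, and the EP group inverse commutes with everything commuting with its argument, a direct verification shows $(a(1-a))^\dag = a^\dag(1-a)^\dag$, whence $bb^* = a - a^2 = a(1-a) \in R^\dag$; by the symmetric argument in $\overline{p}R\overline{p}$, $b^*b = d - d^2 \in R^\dag$.

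For the last step I exploit that $b^2 = (b^*)^2 = 0$ gives $bb^* \cdot b^*b = b^*b \cdot bb^* = 0$. Using the EP property of self-adjoint MP invertibles I will deduce $(bb^*)^\dag (b^*b) = 0 = (b^*b)^\dag(bb^*)$, which forces
\[
(bb^* + b^*b)^\dag \;=\; (bb^*)^\dag + (b^*b)^\dag.
\]
Setting $r := -(bb^*)^\dag b + (b^*b)^\dag b^*$ and invoking the formula $b^\dag = b^*(bb^*)^\dag$ from Lemma \ref{Lemma 2.1}(1) (together with its analogue for $b^*$), I will compute
\[
(pq-qp)^2\, r \;=\; -(bb^* + b^*b)\, r \;=\; b - b^* \;=\; pq - qp,
\]
placing $pq-qp \in (pq-qp)^2 R$; applying $*$ and using $(pq-qp)^* = -(pq-qp)$ also puts it in $R\,(pq-qp)^2$. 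Hence $\mathrm{ind}(pq-qp) \leq 1$, so $pq-qp \in R^\sharp$, and the skew-adjoint MP/$\sharp$-equivalence from the first paragraph upgrades this to $pq-qp \in R^\dag$. The delicate point is the final calculation: the four cross-terms must collapse to exactly $b - b^*$, and this works only because of the mutual annihilation coming from $b^2 = 0$ combined, in just the right combination, with the identities $bb^*(bb^*)^\dag b = bb^\dag b = b$ and $(b^*b)(b^*b)^\dag b^* = b^*$ supplied by Lemma \ref{Lemma 2.1}(1).
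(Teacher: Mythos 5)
Your proof is correct, and while your $(1)\Rightarrow(2)$ essentially retraces the paper's route, your $(2)\Rightarrow(1)$ ends differently. For $(1)\Rightarrow(2)$ the paper also passes through Lemma \ref{Lemma 2.11}, Lemma \ref{Lemma 2.12}, Lemma \ref{Lemma 2.1}(2) and Corollary \ref{corollary 2.8}; where it invokes \cite[Proposition 2]{Patricio Puystjens} (via the range equality $(b-b^*)^*R=(b-b^*)R$, and again for $(b-b^*)^2$ and $pqp$) you substitute the elementary observations that a skew-adjoint MP invertible element is group invertible and a self-adjoint group invertible element is MP invertible, and you extract both $p\overline q\in R^{\dag}$ and $pq\in R^{\dag}$ from the single identity $bb^*=a-a^2=(p-a)-(p-a)^2$ instead of rerunning the argument with $p\mapsto 1-p$; this makes that direction more self-contained. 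For $(2)\Rightarrow(1)$ the paper shows $bb^*=pqp(p-q)^2\in R^{\dag}$ using \cite[Corollary 12]{Mary}, gets $b\in R^{\dag}$ from Lemma \ref{Lemma 2.1}(2), and then simply cites \cite[Theorem 4.1(iv)]{Benitez CvetkovicIlic} (equivalently \cite[Theorem 13]{Li 2008}) to conclude $pq-qp\in R^{\dag}$, which also yields an explicit formula for $(pq-qp)^{\dag}$. You instead reach $bb^*, b^*b\in R^{\dag}$ through Theorem \ref{Theorem 2.4}, Lemma \ref{lemma 2.10} and commuting EP elements, and then replace the citation by a direct argument: $b^2=(b^*)^2=0$ gives the mutual annihilations, the element $r=-(bb^*)^{\dag}b+(b^*b)^{\dag}b^*$ satisfies $(pq-qp)^2r=pq-qp$, so $pq-qp\in (pq-qp)^2R\cap R(pq-qp)^2$, hence group invertible, hence (being skew-adjoint) MP invertible. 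I checked the computation and it is valid; what your route buys is independence from the external Theorem 4.1(iv), at the cost of not producing the explicit expression for $(pq-qp)^{\dag}$ (though one is implicit in $r$). Three small points to tighten: the inclusion $R^{\sharp}\subseteq R^{\dag}$ is false in general, so phrase the step for $a$ and $c$ as ``self-adjoint group invertible elements are MP invertible''; before using $b^{\dag}=b^*(bb^*)^{\dag}$ state explicitly that $b\in R^{\dag}$, which follows from $bb^*\in R^{\dag}$ and Lemma \ref{Lemma 2.1}(2) since $R$ is $*$-reducing; and the direction of the skew-adjoint equivalence you need at the very end (group invertible $\Rightarrow$ MP invertible) is not literally the one proved in your first paragraph, though it follows from the one-line check $(x^{\sharp})^*=(x^*)^{\sharp}=-x^{\sharp}$.
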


\begin{proof}
(1)$\Rightarrow$(2) Since $b-b^{*} = pq-qp\in R^{\dag}$ and $(b-b^{*})R = (b^{*}-b)R = [(b-b^{*})]^{*}R$,
                    we have $(b-b^{*})^{\dag} = (b-b^{*})^{\sharp}$ by \cite[Proposition 2]{Patricio Puystjens}.
                    It can be verified that $[(b-b^{*})^{\dag}]^{2}$ is the MP inverse of $(b-b^{*})^{2}$.
                    Again, by \cite[Proposition 2]{Patricio Puystjens}, it follows that $[(b-b^{*})^{2}]^{\dag} = [(b-b^{*})^{2}]^{\sharp}$
                    since $[(b-b^{*})^{2}]^*R = (b-b^{*})^{2}R$.
                    In view of Lemma \ref{Lemma 2.11}, we have $bb^{*}\in R^{\mathrm{d}}$ and ind$(bb^{*})$ $\leq$ ind$[(b-b^{*})^{2}] \leq$ 1.

                    Note that $bb^{*} = pq(1-p)qp = pqp-(pqp)^{2}$.
                    By Lemma \ref{Lemma 2.12}, we obtain $pqp\in R^{\mathrm{d}}$ and ind$(pqp) \leq$ ind$(bb^{*})\leq 1$,
                    which means $pqp\in R^{\sharp}$.
                    Hence $(pqp)^{\dag} = (pqp)^{\sharp}$ because $(pqp)^{*}R = pqpR$ (see \cite[Proposition 2]{Patricio Puystjens}).
                    Now, substituting $pq$ for $r$ in Lemma \ref{Lemma 2.1}(2), one can see that $pq\in R^{\dag}$.

                    Thus, $pq-qp\in R^{\dag}$ implies $pq\in R^{\dag}$.
                    Replacing $p$ by $(1-p)$, we get $(1-p)q\in R^{\dag}$ since $(1-p)q-q(1-p) = -(pq-qp)\in R^{\dag}$.
                    In addition, we have $p-q\in R^{\dag}$ by Corollary \ref{corollary 2.8}.

(2)$\Rightarrow$(1) It follows that $(p-q)(p-q)^{\dag} = (p-q)^{\dag}(p-q)$ since $p-q\in R^{\dag}$ and $(p-q)^{*} = p-q$.
                    One can easily check that $[(p-q)^{\dag}]^{2}$Ϊ$(p-q)^{2}$.
                    By Lemma \ref{Lemma 2.1}(1), we have $pqp\in R^{\dag}$ since $pq\in R^{\dag}$.
                    Meanwhile, $[(p-q)^{2}]^{*} = (p-q)^{2}$, $(pqp)^* =pqp$ and $bb^{*} = pqp(p-q)^{2} = (p-q)^{2}pqp$.
                    Combining these facts we can see that $pqp$, $(pqp)^{\dag}$, $(p-q)^{2}$ and $[(p-q)^{2}]^{\dag}$ commute with each other
                    according to \cite[Corollary 12]{Mary}.
                    Now, it is trivial to verify that $(pqp)^{\dag}(p-q)^{\dag}(p-q)^{\dag}$ is the MP inverse of $bb^{*} = pqp(p-q)^{2}$.
                    Moreover, it is clear that $b\in R^{\dag}$ by Lemma \ref{Lemma 2.1}(2).
                    Finally, $pq-qp\in R^{\dag}$ follows by \cite[Theorem 4.1(iv)]{Benitez CvetkovicIlic} (see also \cite[Theorem 13]{Li 2008}).
\end{proof}

\begin{theorem}\label{Theorem 2.14}
The following conditions are equivalent for any two projections $p$ and $q$ in a $*$-reducing ring $R$:

\emph{(1)} $pq+qp\in R^{\dag}$;

\emph{(2)} $p+q\in R^{\dag}$ and $pq\in R^{\dag}$.
\end{theorem}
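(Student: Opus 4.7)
The plan is to introduce $s = pq + qp$ and $t = p+q$, and exploit the algebraic identities
\[
    s \;=\; t^{2}-t \;=\; (1-t)^{2}-(1-t),
\]
which reduce the whole theorem to MP-theory for the single self-adjoint element $t$, combined with Lemma \ref{lemma 2.10} as the bridge between $pq \in R^{\dag}$ and $1-p-q \in R^{\dag}$.

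For (1) $\Rightarrow$ (2): since $s^{*} = s$ and $s \in R^{\dag}$, \cite[Proposition 2]{Patricio Puystjens} yields $s^{\dag} = s^{\sharp}$, so $s \in R^{\mathrm{d}}$ with $\mathrm{ind}(s) \leq 1$. I would then apply Lemma \ref{Lemma 2.12} in its $r-r^{2}$ form twice: once with $r = t$, which gives $r - r^{2} = -s$, and once with $r = 1-t$, which again gives $r - r^{2} = -s$. In both cases the conclusion is that the element lies in $R^{\mathrm{d}}$ with index at most $1$, i.e.\ in $R^{\sharp}$. Since $t$ and $1-t$ are self-adjoint, \cite[Proposition 2]{Patricio Puystjens} upgrades these to $p+q = t \in R^{\dag}$ and $1-p-q \in R^{\dag}$, and Lemma \ref{lemma 2.10} converts the latter into $pq \in R^{\dag}$.

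For (2) $\Rightarrow$ (1): Lemma \ref{lemma 2.10} first turns $pq \in R^{\dag}$ into $1-p-q \in R^{\dag}$. Writing $a = p+q$ and $b = 1-p-q$, I have $a^{*} = a$, $b^{*} = b$, $a, b \in R^{\dag}$, and $ab = ba = -s$. Because $b$ commutes with the self-adjoint MP-invertible element $a$, \cite[Corollary 12]{Mary} yields $a^{\dag}b = b a^{\dag}$; reapplying the same corollary with $b$ playing the role of the self-adjoint element then gives $a^{\dag}b^{\dag} = b^{\dag}a^{\dag}$. With $\{a, b, a^{\dag}, b^{\dag}\}$ pairwise commuting and $a, b$ self-adjoint, it is a direct check that $a^{\dag}b^{\dag}$ satisfies the four Penrose equations for $ab$, so $s = -ab \in R^{\dag}$ with $s^{\dag} = -a^{\dag}b^{\dag}$.

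The delicate point is the commutativity chain in (2) $\Rightarrow$ (1); once it is in hand, the Penrose equations collapse routinely. In both directions the $*$-reducing hypothesis on $R$ enters only through Lemma \ref{lemma 2.10}.
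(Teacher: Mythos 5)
Your proposal is correct and follows essentially the same route as the paper: the identity $pq+qp=(p+q)^{2}-(p+q)=(p+q-1)^{2}+(p+q-1)$, Patricio--Puystjens Proposition 2 together with Lemma \ref{Lemma 2.12} for (1)$\Rightarrow$(2), Lemma \ref{lemma 2.10} as the bridge to $pq\in R^{\dag}$, and Mary's Corollary 12 to set up the commuting family $\{p+q,\,1-p-q,\,(p+q)^{\dag},\,(1-p-q)^{\dag}\}$ for (2)$\Rightarrow$(1). The only differences are cosmetic (working with $1-p-q$ instead of $p+q-1$ and tracking the sign explicitly), so nothing further is needed.
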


\begin{proof}
(1)$\Rightarrow$(2) According to \cite[Proposition 2]{Patricio Puystjens},
                    it follows that $(pq+qp)^{\dag} = (pq+qp)^{\sharp}$ since $pq+qp\in R^{\dag}$ and $(pq+qp)^{*}R = (pq+qp)R$.
                    Note that $(p+q)^{2}-(p+q) = (p+q-1)^{2}+(p+q-1) = pq+qp\in R^{\sharp}$.
                    By Lemma \ref{Lemma 2.12}, we have $p+q, p+q-1\in R^{d}$ with
                    ind$(p+q) \leq$ ind$(pq+qp) \leq 1$ and ind$(p+q-1) \leq$ ind$(pq+qp) \leq 1$,
                    i.e., $p+q$, $p+q-1\in R^{\sharp}$.
                    On the other hand, $(p+q)^*R = (p+q)R$ and $(p+q-1)^*R = (p+q-1)R$ imply
                    $(p+q)^{\dag} = (p+q)^{\sharp}$ and $(p+q-1)^{\dag} = (p+q-1)^{\sharp}$ (see \cite[Proposition 2]{Patricio Puystjens}).
                    Finally, by Lemma \ref{lemma 2.10}, we have $pq\in R^{\dag}$ since $(p+q-1)^{\dag}$.

(2)$\Rightarrow$(1) First, $pq\in R^{\dag}$ implies $p+q-1 \in R^{\dag}$ by Lemma \ref{lemma 2.10}.
                    Combining this with the hypothesis $p+q\in R^{\dag}$
                    and the facts $(p+q)^* = p+q$, $(p+q-1)^* = p+q-1$ and $pq+qp = (p+q)(p+q-1) = (p+q-1)(p+q)$,
                    one can see that $p+q$, $p+q-1$, $(p+q-1)^{\dag}$ and $(p+q)^{\dag}$ are commutative with each other
                    by \cite[Corollary 12]{Mary}.
                    Whence it is straightforward to check that $(pq+qp)^{\dag} = (p+q)^{\dag}(p+q-1)^{\dag}$.
\end{proof}
\centerline {\bf ACKNOWLEDGMENTS}
This research is supported by the National Natural Science Foundation of China (11201063),
the Specialized Research Fund for the Doctoral Program of Higher Education (20120092110020),
the Natural Science Foundation of Jiangsu Province(BK2010393) and the Foundation of Graduate Innovation Program of Jiangsu Province(CXZZ12-0082).
\bigskip

\end{document}